\title[Recursive formulas for sums of squares...]{Recursive formulas for sums of squares and sums of triangular numbers}
 \theoremstyle{definition}
 \newtheorem{definition}{Definition}
  \theoremstyle{plain}
  \newtheorem{lemma}      [definition]{Lemma}
  \newtheorem{theorem}    {Theorem}
  \newtheorem{corollary}  {Corollary}
  \theoremstyle{remark}
\begin{document}
  \author{Mohamed El Bachraoui}
  \address{Dept. Math. Sci,
 United Arab Emirates University, PO Box 17551, Al-Ain, UAE}
 \email{melbachraoui@uaeu.ac.ae}
 \keywords{Sums of squares; Sums of triangular numbers; Divisor functions; Inductive formulas; Infinite products; Arithmetic functions;}
 \subjclass{11B75, 11B50}
  %
  \begin{abstract}
  We prove recursive formulas for sums of squares and sums of triangular numbers in terms of sums of divisors functions and we
  give a variety of consequences of these formulas. Intermediate applications include statements about positivity
  of the coefficients of some infinite products.
  \end{abstract}
  \date{\textit{\today}}
  \maketitle
 \section{Introduction}
 \noindent
 The purpose of this paper is to give inductive formulas for sums of squares, sums of triangular numbers,
 and mixed sums of squares and triangular numbers. Our main tool is the following theorem.
 Let $\mathbb{N}=\{1,2,3,\ldots\}$ and let $\mathbb{N}_0 = \{0,1,2,\ldots\}$.
  %
 \begin{theorem}\label{main1} \cite{Elbachraoui}
 Let $\alpha \in \mathbb{N}$. Let $A_1, A_2, \ldots A_{\alpha} \subseteq {\mathbb{N}}$ and
 $\underline{A}=(A_1,A_2,\ldots,A_{\alpha})$ and let
 $f_i : A_i \to \mathbb{C}$ for $i=1,2,\ldots,\alpha$ be arithmetic functions and let
 $\underline{f}=(f_1,f_2,\ldots,f_{\alpha})$.
 If both
 \[
  F_{\underline{A}} (x) =
  \prod_{i=1}^{\alpha} \prod_{n \in A_i}(1- x^{n})^{-\frac{f_i(n)}{n}} =
  \sum_{n=0}^{\infty} p_{\underline{A},\underline{f}}(n) x^n
 \]
 and
 \[
  \sum_{i=1}^{\alpha} \sum_{n \in A_i} \frac{f_i(n)}{n} x^{n}
 \]
 converge absolutely and represent analytic functions in the unit disk $|x|<1$,
 then
 \begin{equation*}\label{main-formula}
 n p_{\underline{A},\underline{f}}(n) =
 \sum_{k=1}^n \left(p_{\underline{A},\underline{f}}(n-k) \sum_{i=1}^{\alpha} f_{i,A_i}(k)\right),
 \end{equation*}
 where
  $p_{\underline{A},\underline{f}}(0)=1$ and
  \[ f_{i,A_i}(k) = \sum_{\substack{d \mid k \\ d\in A_i}}f_i(d). \]
 \end{theorem}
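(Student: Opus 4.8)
The plan is to prove the identity by logarithmic differentiation of the product $F_{\underline{A}}(x)$. Writing $F = F_{\underline{A}}(x)$ and $p(n) = p_{\underline{A},\underline{f}}(n)$ for brevity, I would first take the formal logarithm of the product and expand each factor using the series $-\log(1-t) = \sum_{m \geq 1} t^m/m$. This turns the double product into
\[
\log F = \sum_{i=1}^{\alpha} \sum_{n \in A_i} \frac{f_i(n)}{n} \sum_{m=1}^{\infty} \frac{x^{nm}}{m},
\]
where the convergence and analyticity hypotheses on $F$ and on the auxiliary series $\sum_i \sum_{n} \frac{f_i(n)}{n} x^n$ are precisely what justify taking the logarithm and rearranging this iterated sum inside the unit disk.

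Next I would differentiate and multiply by $x$. The factor $1/m$ cancels against the exponent $nm$ coming from differentiation, leaving
\[
x \frac{F'}{F} = \sum_{i=1}^{\alpha} \sum_{n \in A_i} f_i(n) \sum_{m=1}^{\infty} x^{nm} = \sum_{k=1}^{\infty} \left( \sum_{i=1}^{\alpha} \sum_{\substack{d \mid k \\ d \in A_i}} f_i(d) \right) x^k = \sum_{k=1}^{\infty} \left( \sum_{i=1}^{\alpha} f_{i,A_i}(k) \right) x^k,
\]
where collecting the coefficient of $x^k$ uses that a term $x^{nm}$ contributes to $x^k$ exactly when $n \mid k$ and $n \in A_i$.

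Finally I would clear the denominator. Since $x F' = \sum_{n \geq 1} n\, p(n)\, x^n$ and $F = \sum_{n \geq 0} p(n) x^n$, the relation $x F' = F \cdot \sum_{k \geq 1} \bigl( \sum_i f_{i,A_i}(k) \bigr) x^k$ becomes, after forming the Cauchy product on the right,
\[
\sum_{n=1}^{\infty} n\, p(n)\, x^n = \sum_{n=1}^{\infty} \left( \sum_{k=1}^{n} p(n-k) \sum_{i=1}^{\alpha} f_{i,A_i}(k) \right) x^n.
\]
Comparing the coefficients of $x^n$ yields the claimed recursion, with the initial value $p(0) = 1$ obtained by evaluating the product at $x = 0$.

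The main obstacle is not the algebra but the justification that these analytic operations are legitimate: that the logarithm of the product equals the stated iterated series, that term-by-term differentiation is valid, and that the iterated sum may be reordered to collect the coefficient of each power of $x$. These are exactly the points where the two absolute-convergence and analyticity hypotheses in the statement are needed, and I would invoke them to interchange summation, logarithm, and differentiation throughout the unit disk.
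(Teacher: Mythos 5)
Your proof is correct: the logarithmic differentiation of $F_{\underline{A}}(x)$, the collection of the coefficient of $x^k$ via the divisor condition $n \mid k$, $n \in A_i$, and the final Cauchy product all check out, and you rightly identify the two convergence hypotheses as what licenses the termwise log, differentiation, and rearrangement. Note that this paper does not actually prove Theorem~\ref{main1} at all --- it is quoted without proof from the author's earlier work \cite{Elbachraoui} --- but your argument is precisely the standard one for results of this type (it is how Apostol \cite{Apostol} handles the case $\alpha = 1$, and how the cited source handles the general case), so your approach coincides with the intended proof rather than deviating from it.
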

 \noindent
 We note that Theorem~\ref{main1} for the special case
 $\alpha = 1$, has been given in Apostol \cite{Apostol} and in Robbins in \cite{Robbins} to give formulas relating
 arithmetic functions to sums of divisors functions. The authors' key argument is that generating functions for
 the arithmetic functions they considered
 have the form of infinite products ranging over a single set of natural numbers.
 In our previous work \cite{Elbachraoui} we used Theorem~\ref{main1} to deal
 with arithmetic functions whose generating functions
 involve finitely many infinite products ranging over different sets of natural numbers. We derived a variety of
 inductive formulas for such functions.
 In the present note we shall continue employing
 Theorem~\ref{main1} to deduce inductive formulas for sums of squares, sums of triangular numbers, and mixed sums of
 squares and triangular numbers, see Section~2. We notice at this point that there is a large literature on
 sums of squares and sums of triangular numbers. The interested reader is referred to the classical volumes of 
 Dickson~\cite{Dickson} and the recent book of Williams~\cite{Williams} along with their references. 
 With an essential help of identities given in Williams in~\cite{Williams},
  we will give a variety of consequences of our recursive formulas, see Section~3. Further, we shall
 prove that the coefficients of some infinite products are all positive, see Section~4.
 By way of example, we will prove the following results:\\
 (1)
 \[
 \left(\sum_{n=1}\left(\sigma(n)-4\sigma(n/4) \right) x^n \right)
 \left( \sum_{n=1}\left(\sigma^{\ast}(n)-4\sigma^{\ast}(n/2) \right) x^n \right) =
 \]
 \[
 \sum_{n=1}^{\infty} \big( n\left(\sigma(n)-4\sigma(n/4)\right) -
  \left(\sigma^{\ast} (n)- 4\sigma^{\ast}(n/2)\right) \big) x^n,
  \]
  see Corollary~\ref{infinite-sums} below.
  Here $\sigma(n)$ and $\sigma^{\ast}(n)$ are as in Definitions \ref{def:sigma} and \ref{def:sigma-ast} below.
  \\
  (2) If $p\equiv 1\bmod 4$ is prime, then
  \[
 \sum_{j=1}^{p-1} r_2(j)\left( \sigma^{\ast}(p-j)- 4\sigma^{\ast}(\frac{p-j}{2}) \right)= p-1,
 \]
 refer to Theorem~\ref{prime-2-squares} below.
 Here $r_2(n)$ is the number of representations of $n$ as a sum of two squares.
 \\
 (3)
 If $p$ and $4p +1$ are primes, then
 \[
 \sum_{j=1}^{p-1} t_2(j)\left( \sigma(p-j)-4 \sigma(\frac{p-j}{2}) \right) = -1,
 \]
 see Theorem~\ref{p-4p+1} below.
 Here $t_2(n)$ is the number of representations of $n$ as a sum of two triangular numbers. 
  \\
 (4) The following is a direct consequence of Theorem~\ref{master-series} below.
 If $x \in \mathbb{C}$ with $|x|<1$ and $a\in\mathbb{N}$, then the coefficients of the infinite series 
 \[ \prod_{n=1}^{\infty} (1- x^n)^{-a} (1-x^{3n})^{a},\quad 
 \prod_{n=1}^{\infty} (1- x^n)^{-a} (1-x^{3n-1})^{a},
 \]
 and
 \[
  \prod_{n=1}^{\infty} (1- x^n)^{-a} (1-x^{3n})^{a} (1-x^{3n-1})^{a}
 \]
 are all positive.
 
 Our identities involve sums of divisors functions which are introduced in the following three definitions.
  \begin{definition}\label{def:sigma}
 Let the function $\sigma$ be defined on $\mathbb{Q}$ as follows: $\sigma(0)=1$, if $q\in \mathbb{Q}\setminus \mathbb{N}_0$,
 then $\sigma(q)=0$ , and if
 $q\in \mathbb{N}$, then
 \[
 \sigma(q) =
 \sum_{d\mid q}d.
 \]
 \end{definition}
 \begin{definition}
 Let $n, r\in \mathbb{N}_0$, let $m \in \mathbb{N}$, and let
 \[
 \sigma_{r,m}(n) = \sum_{\substack{d\mid n\\ d\equiv r\bmod m}} d
 \]
 If $m=2$ and $r=1$ we often write $\sigma^{o}(n)$ rather than $\sigma_{1,2}(n)$ and
 if $m=2$ and $r=0$ we often write $\sigma^{E}(n)$ rather than $\sigma_{0,2}(n)$.
 \end{definition}
 \noindent
 We have the following basic facts: for $m, n\in \mathbb{N}$
 \begin{equation}\label{divisors-basics}
 \sigma(n)=\sigma^o(n) + \sigma^E(n),\quad \sigma_{m,2m}(n) = m \sigma^o (n/m),\quad\text{and\ } \sigma_{0,m} (n) = m\sigma(n/m).
 \end{equation}
 \begin{definition}\label{def:sigma-ast}
  Let the function $\sigma^{\ast}$ be defined on $\mathbb{Q}$ as follows: if $q\in \mathbb{Q}\setminus \mathbb{N}$,
  then $\sigma^{\ast}(q)=0$ and if
 $q\in \mathbb{N}$, then
 \[
 \sigma^{\ast}(q) =
 \sum_{d\mid q,\ \frac{q}{d}\ \text{odd}}d.
 \]
 \end{definition}
 \noindent
 By \cite[Theorem 3.4]{Williams} we have for all $n\in\mathbb{N}$ that
 \begin{equation}\label{sigma-star}
 \sigma^{\ast}(n) = \sigma(n) - \sigma(n/2).
 \end{equation}
 Further we will need the following two identities due to Jacobi and Gauss respectively:
 \begin{equation} \label{Jacobi}
  \prod_{n=1}^{\infty}(1-x^{2n})(1+x^{2n-1})^2 = \sum_{n=-\infty}^{\infty} x^{n^2}.
  \end{equation}
  \begin{equation} \label{Gauss}
  \prod_{n=1}^{\infty}(1-x^{2n})(1-x^{2n-1})^{-1} = \sum_{n=0}^{\infty} x^{\frac{n(n+1)}{2}}.
  \end{equation}
  Identities (\ref{Jacobi}) and (\ref{Gauss}) can be found for instance in Hardy and Wright \cite{Hardy-Wright}.
 \section{General formulas}
 In this section we give inductive formulas for sums of squares, sums of triangular numbers, and mixed sums of
 squares and triangular numbers.
 \begin{definition}
 Let $k\in\mathbb{N}_0$ and
 let the function $r_k(n)$ be defined on $\mathbb{N}_0$ as follows:
 \[
 r_k(n) = \#\{(x_1,x_2,\ldots,x_k)\in\mathbb{Z}^k:\ n=x_{1}^2+x_{2}^2+\ldots + x_{k}^2 \}.
 \]
 \end{definition}
 \begin{theorem} \label{sum-of-squares}
 If $n \in \mathbb{N}$, then
 \[
 n r_k(n) = 2k \left( \sigma^{\ast}(n)- 4 \sigma^{\ast}(n/2) +
    \sum_{j=1}^{n-1} r_k (j) \left( \sigma^{\ast}(n-j)- 4 \sigma^{\ast}(\frac{n-j}{2}) \right) \right).
 \]
 \end{theorem}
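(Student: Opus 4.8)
The plan is to realize $\sum_{n\ge 0} r_k(n)x^n$ as an infinite product of the shape required by Theorem~\ref{main1} and then to read off the recursion. First I would invoke Jacobi's identity~(\ref{Jacobi}): since $\sum_{n=0}^{\infty} r_k(n)x^n = \big(\sum_{m=-\infty}^{\infty} x^{m^2}\big)^k$, identity~(\ref{Jacobi}) gives
\[
 \sum_{n=0}^{\infty} r_k(n)x^n = \prod_{n=1}^{\infty} (1-x^{2n})^{k}(1+x^{2n-1})^{2k}.
\]
To bring this to the form $\prod_i\prod_{n\in A_i}(1-x^n)^{-f_i(n)/n}$, I would eliminate the factor $1+x^{2n-1}$ through $1+x^{2n-1} = (1-x^{4n-2})(1-x^{2n-1})^{-1}$, which yields
\[
 \sum_{n=0}^{\infty} r_k(n)x^n = \prod_{n=1}^{\infty} (1-x^{2n})^{k}(1-x^{4n-2})^{2k}(1-x^{2n-1})^{-2k}.
\]

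Next I would match this against Theorem~\ref{main1} with $\alpha = 3$ by taking $A_1$ the even integers with $f_1(m)=-km$, $A_2$ the integers $\equiv 2 \bmod 4$ with $f_2(m)=-2km$, and $A_3$ the odd integers with $f_3(m)=2km$; then $-f_i(m)/m$ reproduces the three exponents $k$, $2k$, $-2k$ respectively, so the triple product is exactly $F_{\underline{A}}(x)$ and $p_{\underline{A},\underline{f}}(n)=r_k(n)$. Since each $f_i(n)/n$ is a bounded constant, the auxiliary series $\sum_{i}\sum_{n\in A_i}(f_i(n)/n)x^n$ is dominated by a geometric series and is analytic on $|x|<1$, so the hypotheses of Theorem~\ref{main1} are met. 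The theorem then delivers
\[
 n\,r_k(n) = \sum_{\ell=1}^{n} r_k(n-\ell)\sum_{i=1}^{3} f_{i,A_i}(\ell).
\]

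The crux is to evaluate the inner divisor sum $\sum_{i=1}^{3} f_{i,A_i}(\ell)$ and to identify it with $2k\big(\sigma^{\ast}(\ell)-4\sigma^{\ast}(\ell/2)\big)$; this is the step I expect to require the most care. Summing over even, $\equiv 2\bmod 4$, and odd divisors gives $-k\,\sigma^{E}(\ell)-2k\,\sigma_{2,4}(\ell)+2k\,\sigma^{o}(\ell)$. Using the basic facts~(\ref{divisors-basics}), namely $\sigma^{E}(\ell)=2\sigma(\ell/2)$, $\sigma_{2,4}(\ell)=2\sigma^{o}(\ell/2)$, and $\sigma^{o}=\sigma-\sigma^{E}$, I would reduce this expression to $2k\big(\sigma(\ell)-5\sigma(\ell/2)+4\sigma(\ell/4)\big)$, and then verify via~(\ref{sigma-star}) that $\sigma^{\ast}(\ell)-4\sigma^{\ast}(\ell/2)$ expands to the very same combination. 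Finally I would split off the term $\ell=n$ (for which $r_k(n-\ell)=r_k(0)=1=p_{\underline{A},\underline{f}}(0)$) from the remaining sum, substitute $j=n-\ell$ in the latter, and factor out $2k$; this produces precisely the stated formula.
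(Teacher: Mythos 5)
Your proposal is correct and follows essentially the same route as the paper: the same rewriting of Jacobi's identity via $1+x^{2n-1}=(1-x^{4n-2})(1-x^{2n-1})^{-1}$, the same application of Theorem~\ref{main1} with $A_1=2\mathbb{N}$, $A_2=4\mathbb{N}-2$, $A_3=2\mathbb{N}-1$ and $f_1(m)=-km$, $f_2(m)=-2km$, $f_3(m)=2km$, and the same reduction of $-k\sigma^{E}-2k\sigma_{2,4}+2k\sigma^{o}$ to $2k\left(\sigma^{\ast}(\cdot)-4\sigma^{\ast}(\cdot/2)\right)$ using (\ref{divisors-basics}) and (\ref{sigma-star}). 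Your explicit check of the analyticity hypotheses and the bookkeeping of the $\ell=n$ term are details the paper leaves implicit, but the argument is the same.
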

 \begin{proof}
 Taking $k$ powers in identity (\ref{Jacobi}) we have
 \[
 \prod_{n=1}^{\infty}(1-x^{2n})^k (1+ x^{2n-1})^{2k} =
 \left( \sum_{n=-\infty}^{\infty} x^{n^2}\right)^k =
 1 + \sum_{n=1}^{\infty} r_k(n) x^n,
 \]
 or equivalently,
 \[
 \prod_{n=1}^{\infty}(1-x^{2n})^k (1-x^{4n-2})^{2k} (1-x^{2n-1})^{-2k} = 1 + \sum_{n=1}^{\infty} r_k(n) x^n.
 \]
 Let in Theorem~\ref{main1}, $A_1 = 2\mathbb{N}$ and $f_1(n)= -kn$, $A_2= 4\mathbb{N}-2$ and $f_2(n)=-2kn$, and
 $A_3= 2\mathbb{N}-1$ and $f_3(n)= 2kn$. Then for $n\geq 1$
 \begin{equation}\label{eq:squares}
 \begin{split}
  n r_k(n) &= -k \sigma^E (n)- 2k \sigma_{2,4}(n) + 2k \sigma^o (n) + \\
   & \sum_{j=1}^{n-1} r_k(j) \left( -k \sigma^E (n-j)- 2k \sigma_{2,4}(n-j) + 2k \sigma^o (n-j) \right).
  \end{split}
 \end{equation}
 Identities (\ref{divisors-basics}) and formula (\ref{sigma-star}) yield
 \[
 \begin{split}
 - \sigma^E (n)- 2 \sigma_{2,4}(n) + 2 \sigma^o (n) 
 &=
 2\left( \sigma(n)- \sigma(n/2) \right) - 8 \left( \sigma(n/2) - \sigma(n/4) \right) \\
 &=
 2 \sigma^{\ast}(n) - 8 \sigma^{\ast} (n/2).
 \end{split}
 \]
 Now put in formula (\ref{eq:squares}) to conclude the desired identity.
 \end{proof}
 \begin{definition}
 Let $k\in \mathbb{N}_0$ and let the function $t_k$ be defined on $\mathbb{N}_0$ as follows:
 \[
 t_k (n) = \# \left\{ (x_1,x_2,\ldots,x_k)\in \mathbb{N}_{0}^k:\ n =
 \sum_{i=1}^k \frac{x_i(x_i +1)}{2} \right\}.
 \]
 \end{definition}
 \begin{theorem} \label{sum-of-triangulars}
 If $n \in \mathbb{N}_0$, then
 \[
 n t_k(n) = k \sum_{j=0}^{n-1} t_k (j) \left( \sigma(n-j)- 4 \sigma(\frac{n-j}{2}) \right).
 \]
 \end{theorem}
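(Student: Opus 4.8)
The plan is to mirror the proof of Theorem~\ref{sum-of-squares}, but starting from Gauss's identity~(\ref{Gauss}) in place of Jacobi's. Raising~(\ref{Gauss}) to the $k$-th power gives
\[
\prod_{n=1}^{\infty}(1-x^{2n})^k (1-x^{2n-1})^{-k} = \left(\sum_{n=0}^{\infty} x^{n(n+1)/2}\right)^k = \sum_{n=0}^{\infty} t_k(n) x^n,
\]
where the middle equality is simply the definition of $t_k(n)$ as the number of representations of $n$ as a sum of $k$ triangular numbers, and $t_k(0)=1$.

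Next I would apply Theorem~\ref{main1} to this product with $\alpha = 2$, taking $A_1 = 2\mathbb{N}$ with $f_1(n) = -kn$ (so that the factor becomes $(1-x^{2n})^{-f_1(2n)/(2n)} = (1-x^{2n})^{k}$) and $A_2 = 2\mathbb{N}-1$ with $f_2(n) = kn$ (so that $(1-x^{2n-1})^{-f_2(2n-1)/(2n-1)} = (1-x^{2n-1})^{-k}$). The hypotheses of Theorem~\ref{main1} are immediate: the product is the generating function of $t_k$ and converges on $|x|<1$, while the associated series $\sum_{i}\sum_{n\in A_i} \frac{f_i(n)}{n} x^n = -k\sum_{n\geq 1} x^{2n} + k \sum_{n\geq 1} x^{2n-1}$ is a combination of geometric series, hence absolutely convergent and analytic on the unit disk. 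The theorem then yields, with $p_{\underline{A},\underline{f}}(n) = t_k(n)$,
\[
n t_k(n) = \sum_{j=1}^{n} t_k(n-j)\big(f_{1,A_1}(j) + f_{2,A_2}(j)\big),
\]
where $f_{1,A_1}(j) = \sum_{d\mid j,\ d\ \text{even}} (-kd) = -k\sigma^E(j)$ and $f_{2,A_2}(j) = \sum_{d\mid j,\ d\ \text{odd}} kd = k\sigma^o(j)$.

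The remaining work is purely arithmetic. Using $\sigma(j) = \sigma^o(j) + \sigma^E(j)$ together with $\sigma^E(j) = \sigma_{0,2}(j) = 2\sigma(j/2)$ from~(\ref{divisors-basics}), I would rewrite
\[
\sigma^o(j) - \sigma^E(j) = \big(\sigma(j) - 2\sigma(j/2)\big) - 2\sigma(j/2) = \sigma(j) - 4\sigma(j/2),
\]
so that $n t_k(n) = k\sum_{j=1}^{n} t_k(n-j)\big(\sigma(j) - 4\sigma(j/2)\big)$. Finally, re-indexing by $i = n-j$ (which runs from $0$ to $n-1$ as $j$ runs from $n$ down to $1$) converts this into the stated form. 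The only step requiring any care is this last bookkeeping, together with the boundary check at $n=0$ where both sides vanish; everything else is a direct transcription of the sums-of-squares argument, so I expect no genuine obstacle.
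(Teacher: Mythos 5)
Your proposal is correct and takes essentially the same route as the paper: raise Gauss's identity~(\ref{Gauss}) to the $k$-th power and apply Theorem~\ref{main1} with $A_1=2\mathbb{N}$, $f_1(n)=-kn$, $A_2=2\mathbb{N}-1$, $f_2(n)=kn$. The paper states this and stops, leaving implicit the divisor-sum simplification $k\bigl(\sigma^o(j)-\sigma^E(j)\bigr)=k\bigl(\sigma(j)-4\sigma(j/2)\bigr)$ and the re-indexing, which you have correctly carried out in full.
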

 \begin{proof}
 Taking $k$ powers in identity (\ref{Gauss}) we have
 \[
 \prod_{n=1}^{\infty}(1-x^{2n})^k (1- x^{2n-1})^{-k} =
 \left( \sum_{n=0}^{\infty} x^{\frac{n(n+1)}{2}}\right)^k =
 \sum_{n=0}^{\infty} t_k(n) x^n,
 \]
 and the result follows by
 Theorem~\ref{main1} applied to $A_1 = 2\mathbb{N}$, $f_1(n)= -kn$, $A_2= 2\mathbb{N}-1$, and $f_2(n)=kn$.
 \end{proof}
 \begin{definition}
 Let $k,l \in \mathbb{N}$ and let the function $u_{k,l}$ be defined on $\mathbb{N}_0$ as follows:
 \[
 u_{k,l}(n) = \# \left\{(x_1,\ldots,x_k,y_1,\ldots,y_l)\in \mathbb{Z}^k \times \mathbb{N}_{0}^l:\
  n = \sum_{i=1}^k x_i^2 + \sum_{i=1}^l \frac{y_i(y_i+1)}{2} \right\}.
 \]
 \end{definition}
 \begin{theorem} \label{mixed-sums}
 If $k,l, n\in\mathbb{N}$, then
 \[
 \begin{split}
 n u_{k,l}(n) &= (2k+l)\sigma(n) - 2(5k +2l) \sigma(n/2) + 8k \sigma(n/4) + \\
 & \sum_{j=1}^{n-1} u_{k,l}(j)\left( (2k+l)\sigma(n-j) -2(5k+2l)\sigma(\frac{n-j}{2}) + 8k\sigma(\frac{n-j}{4}) \right).
 \end{split}
 \]
 \end{theorem}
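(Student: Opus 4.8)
The plan is to mimic the proofs of Theorems~\ref{sum-of-squares} and~\ref{sum-of-triangulars}: first write the generating function of $u_{k,l}$ as a single infinite product, then feed it into Theorem~\ref{main1}, and finally simplify the divisor sums. Since $u_{k,l}(n)$ counts representations by $k$ squares together with $l$ triangular numbers, its generating function factors as the product of the $k$-th power of the theta series in~(\ref{Jacobi}) and the $l$-th power of the series in~(\ref{Gauss}). Invoking both Jacobi's and Gauss's identities gives
\[
\sum_{n=0}^{\infty} u_{k,l}(n)\, x^n = \prod_{n=1}^{\infty}(1-x^{2n})^{k+l}(1+x^{2n-1})^{2k}(1-x^{2n-1})^{-l}.
\]

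Second, I would clear the factor $(1+x^{2n-1})^{2k}$ exactly as in the proof of Theorem~\ref{sum-of-squares}, writing $(1+x^{2n-1})=(1-x^{4n-2})(1-x^{2n-1})^{-1}$, so that
\[
\sum_{n=0}^{\infty} u_{k,l}(n)\, x^n = \prod_{n=1}^{\infty}(1-x^{2n})^{k+l}(1-x^{4n-2})^{2k}(1-x^{2n-1})^{-(2k+l)}.
\]
The product is now in the form required by Theorem~\ref{main1}, with index sets $A_1=2\mathbb{N}$, $A_2=4\mathbb{N}-2$, $A_3=2\mathbb{N}-1$ and functions $f_1(n)=-(k+l)n$, $f_2(n)=-2kn$, $f_3(n)=(2k+l)n$. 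Applying the theorem then yields
\[
n\, u_{k,l}(n) = \sum_{j=0}^{n-1} u_{k,l}(j)\Big(-(k+l)\sigma^E(n-j) - 2k\,\sigma_{2,4}(n-j) + (2k+l)\sigma^o(n-j)\Big),
\]
where the $j=0$ summand supplies the three bare divisor terms of the claimed formula.

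Finally, the remaining work is the divisor-sum simplification. Using~(\ref{divisors-basics}) I would substitute $\sigma^E(m)=2\sigma(m/2)$, $\sigma_{2,4}(m)=2\sigma^o(m/2)=2\sigma(m/2)-4\sigma(m/4)$, and $\sigma^o(m)=\sigma(m)-2\sigma(m/2)$ into the bracketed expression and collect the coefficients of $\sigma(m)$, $\sigma(m/2)$, and $\sigma(m/4)$; the coefficient of $\sigma(m/2)$ is $-2(k+l)-4k-2(2k+l)=-2(5k+2l)$, which is precisely the stated value, while those of $\sigma(m)$ and $\sigma(m/4)$ come out to $2k+l$ and $8k$. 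Separating the $j=0$ term from the sum then gives the theorem. The main obstacle is purely the bookkeeping in this last step, in particular tracking how the odd-index factor acquires the combined exponent $-(2k+l)$ (with $-2k$ coming from the square part and $-l$ from the triangular part) and then verifying that the three divisor functions collapse into exactly the asserted $\sigma$-combination.
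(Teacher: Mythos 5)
Your proof is correct and follows essentially the same route as the paper: write the generating function of $u_{k,l}$ as the product of the $k$-th power of (\ref{Jacobi}) and the $l$-th power of (\ref{Gauss}), convert it to a pure product of factors $(1-x^m)^{\pm}$, apply Theorem~\ref{main1}, and collapse the resulting divisor sums using (\ref{divisors-basics}). The only (immaterial) difference is the normal form fed into Theorem~\ref{main1}: the paper pushes one step further, replacing $(1-x^{4n-2})^{2k}$ by $(1-x^{2n})^{2k}(1-x^{4n})^{-2k}$ so that its divisor sums involve $\sigma_{0,4}(m)=4\sigma(m/4)$, whereas you stop at the intermediate form (exactly as in the paper's proof of Theorem~\ref{sum-of-squares}) and work with $\sigma_{2,4}(m)=2\sigma(m/2)-4\sigma(m/4)$; both yield the same coefficients $2k+l$, $-2(5k+2l)$, $8k$, and your bookkeeping checks out.
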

 \begin{proof}
 Clearly
 \[
 \prod_{n=1}^{\infty} (1-x^{2n})^{k+l} (1+x^{2n-1})^{2k} (1-x^{2n-1})^{-l} = 1+ \sum_{n=1}^{\infty} u_{k,l}(n)x^n,
 \]
 that is,
 \[
 \prod_{n=1}^{\infty} (1-x^{2n})^{k+l} (1-x^{4n-2})^{2k} (1-x^{2n-1})^{-2k}(1-x^{2n-1})^{-l} = 1+ \sum_{n=1}^{\infty} u_{k,l}(n)x^n,
 \]
 equivalently,
 \[
 \prod_{n=1}^{\infty} (1-x^{2n})^{3k+l} (1-x^{4n})^{-2k} (1-x^{2n-1})^{-2k-l} = 1+ \sum_{n=1}^{\infty} u_{k,l}(n)x^n,
 \]
 which by Theorem~\ref{main1} gives the desired identity.
 \end{proof}
 \section{Sums of two, four, and eight squares}
 The following formulas for $r_2(n)$, $r_4(n)$, and $r_8(n)$ are well-known and can be found for instance in Williams~\cite{Williams}.
 If $n\in\mathbb{N}$, then
 \begin{equation} \label{r2-r4-r8}
 \begin{split}
 r_2(n) &= 4 \sum_{d\mid n} \left(\frac{-4}{d} \right), \\
 r_4(n) &= 8 \sigma(n) -32 \sigma(n/4), \\
 r_8(n) &= 16 (-1)^n \sum_{d\mid n} (-1)^d d^3,
 \end{split}
 \end{equation}
 where
 \[
 \left(\frac{-4}{d} \right) =
 \begin{cases}
 1,\ \text{if\ } d\equiv 1\bmod 4, \\
 -1,\ \text{if\ } d\equiv 3\bmod 4, \\
 0,\ \text{otherwise.}
 \end{cases}
 \]
 We start by evaluating a convolution.
 \begin{theorem}\label{convolution}
 If $n\in \mathbb{N}$, then
 \[
 \sum_{j=1}^{n-1}\left(\sigma(j)-4\sigma(\frac{j}{4}) \right)
 \left(\sigma^{\ast}(n-j)-4\sigma^{\ast}(\frac{n-j}{2}) \right)=
 \]
 \[
 n \left(\sigma(n)-4 \sigma(\frac{n}{4}) \right)- \left( \sigma^{\ast}(n)-4\sigma^{\ast}(\frac{n}{2}) \right).
 \]
 \end{theorem}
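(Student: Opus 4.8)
The plan is to derive the identity as the specialization $k=4$ of Theorem~\ref{sum-of-squares}, read through the closed form for $r_4$. The essential input is Jacobi's evaluation $r_4(n)=8\sigma(n)-32\sigma(n/4)=8\bigl(\sigma(n)-4\sigma(n/4)\bigr)$ recorded in~\eqref{r2-r4-r8}, which says that on $\mathbb{N}$ the arithmetic function $\sigma(n)-4\sigma(n/4)$ appearing throughout the statement is nothing but $\tfrac18 r_4(n)$. The convolution of divisor sums in the theorem should therefore be a rewriting of the four-square recursion furnished by Theorem~\ref{sum-of-squares}.

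First I would write down Theorem~\ref{sum-of-squares} at $k=4$, abbreviating $b(m):=\sigma^{\ast}(m)-4\sigma^{\ast}(m/2)$, namely
\[
n\,r_4(n)=8\,b(n)+8\sum_{j=1}^{n-1} r_4(j)\,b(n-j).
\]
Then I would substitute $r_4(m)=8\bigl(\sigma(m)-4\sigma(m/4)\bigr)$ into the diagonal factor $r_4(n)$ and into every $r_4(j)$ inside the sum. The summation index runs only over $1\le j\le n-1$, so it never reaches the exceptional value $r_4(0)=1$; that boundary value is instead carried by the separated term $8\,b(n)$. Collecting the numerical constants and rearranging then isolates the convolution $\sum_{j=1}^{n-1}\bigl(\sigma(j)-4\sigma(j/4)\bigr)\,b(n-j)$ on one side against $n\bigl(\sigma(n)-4\sigma(n/4)\bigr)$ and the lone term $\sigma^{\ast}(n)-4\sigma^{\ast}(n/2)$ on the other, reproducing the shape of the claimed identity up to the numerical normalization discussed below. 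The divisor identities~\eqref{divisors-basics} and~\eqref{sigma-star} are already absorbed into Theorem~\ref{sum-of-squares}, so no further manipulation of the $\sigma$'s is required.

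The step I expect to be the main obstacle is the bookkeeping of the constants, and in particular the fact that the three groups of terms do \emph{not} scale uniformly under the substitution $r_4=8(\sigma-4\sigma(\cdot/4))$. The diagonal term $n\,r_4(n)$ and the boundary term $8\,b(n)$ each acquire a single factor coming from one occurrence of $r_4$, whereas each summand $r_4(j)\,b(n-j)$ acquires that factor only through its $r_4(j)$, so the convolution term ends up rescaled differently from the other two. Tracking this asymmetry --- ultimately rooted in $r_4(0)=1$ being the one place where the proportionality $r_4=8(\sigma-4\sigma(\cdot/4))$ fails --- is exactly what pins down the precise normalization of the final identity, and it is prudent to confirm the resulting constant against a small value of $n$ before declaring victory. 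Everything past that point is routine algebra.
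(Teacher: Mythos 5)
Your route is exactly the paper's: specialize Theorem~\ref{sum-of-squares} to $k=4$ (this is the paper's equation~(\ref{four-squares})) and substitute the closed form $r_4(m)=8\left(\sigma(m)-4\sigma(m/4)\right)$ from~(\ref{r2-r4-r8}). But the caution you flag about non-uniform scaling is not mere prudence --- it exposes a genuine defect in the statement itself. Writing $a(m)=\sigma(m)-4\sigma(m/4)$ and $b(m)=\sigma^{\ast}(m)-4\sigma^{\ast}(m/2)$, the substitution turns
\[
n\,r_4(n)=8\,b(n)+8\sum_{j=1}^{n-1}r_4(j)\,b(n-j)
\]
into $8n\,a(n)=8\,b(n)+64\sum_{j=1}^{n-1}a(j)\,b(n-j)$, that is,
\[
8\sum_{j=1}^{n-1}a(j)\,b(n-j)=n\,a(n)-b(n),
\]
so after dividing by $8$ the diagonal and boundary terms carry coefficient $1$ while the convolution retains a factor $8$ (your accounting of where the factors come from is slightly muddled --- the boundary term $8\,b(n)$ contains no occurrence of $r_4$ at all --- but your conclusion that the three groups scale asymmetrically is correct). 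The small-$n$ check you recommend settles the normalization: at $n=2$ the convolution is $a(1)b(1)=1$, whereas $2a(2)-b(2)=6-(-2)=8$. Hence the theorem as printed, which asserts the identity with no factor $8$ on the convolution, is false as stated; the paper's own proof is precisely your substitution but silently drops this factor, and Corollary~\ref{infinite-sums} (as well as example~(1) of the introduction) inherits the same error. Your derivation, carried to completion, proves the corrected identity; the only thing missing from your write-up is the explicit final statement that the constant pinned down by the asymmetry is $8$.
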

 \begin{proof}
 By Theorem~\ref{sum-of-squares}
 \begin{equation} \label{four-squares}
 n r_4(n) = 8\sigma^{\ast}(n)- 32 \sigma^{\ast}(n/2) +
     \sum_{j=1}^{n-1} r_4 (j) \left( 8 \sigma^{\ast}(n-j)-
    32 \sigma^{\ast}(\frac{n-j}{2}) \right),
 \end{equation}
 which combined with (\ref{r2-r4-r8}) for $r_4(n)$ yields the result.
 \end{proof}
 \begin{corollary} \label{infinite-sums}
 \[
 \left(\sum_{n=1}^{\infty}\left(\sigma(n)-4\sigma(n/4) \right) x^n \right)
 \left( \sum_{n=1}^{\infty}\left(\sigma^{\ast}(n)-4\sigma^{\ast}(n/2) \right) x^n \right) =
 \]
 \[
 \sum_{n=1}^{\infty} \big( n \sigma(n)-4n \sigma(n/4) -
  \sigma^{\ast} (n)+ 4\sigma^{\ast}(n/2) \big) x^n.
  \]
 \end{corollary}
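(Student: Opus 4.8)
The plan is to read off the identity as a comparison of the coefficients of $x^n$ on both sides, using Theorem~\ref{convolution} to evaluate the convolution that arises from the Cauchy product on the left.

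First I would set $a_j = \sigma(j)-4\sigma(j/4)$ and $b_j = \sigma^{\ast}(j)-4\sigma^{\ast}(j/2)$, so that the left-hand side is $\left(\sum_{j\geq 1} a_j x^j\right)\left(\sum_{j\geq 1} b_j x^j\right)$. Since both factors are power series with no constant term (both sums begin at $n=1$), their Cauchy product begins at $x^2$, and for $n\geq 2$ the coefficient of $x^n$ is the convolution $\sum_{j=1}^{n-1} a_j b_{n-j}$. This is precisely the left-hand side of Theorem~\ref{convolution}, so that coefficient equals $n(\sigma(n)-4\sigma(n/4)) - (\sigma^{\ast}(n)-4\sigma^{\ast}(n/2))$, which is exactly the coefficient of $x^n$ asserted on the right.

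It remains to reconcile the $n=1$ term, since the product carries no linear contribution. On the left the coefficient of $x^1$ is therefore $0$, whereas on the right it is $\sigma(1)-4\sigma(1/4)-\sigma^{\ast}(1)+4\sigma^{\ast}(1/2)$. Here I would invoke the conventions of Definitions~\ref{def:sigma} and~\ref{def:sigma-ast}: because $1/4\notin\mathbb{N}_0$ and $1/2\notin\mathbb{N}$ we have $\sigma(1/4)=0$ and $\sigma^{\ast}(1/2)=0$, while $\sigma(1)=\sigma^{\ast}(1)=1$, so this coefficient vanishes as well. Hence the two series agree coefficientwise, which establishes the identity.

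I do not anticipate any real obstacle: the entire content is carried by Theorem~\ref{convolution}, and the only subtlety is the bookkeeping at $n=1$, which the definitional conventions dispose of. For full rigour one should remark that both factors are absolutely convergent on $|x|<1$ (the arithmetic functions involved grow only polynomially), so the Cauchy product is legitimate and the termwise comparison is valid; but this is routine and does not affect the structure of the argument.
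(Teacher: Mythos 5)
Your proposal is correct and matches the paper's approach exactly: the paper derives this corollary as an immediate consequence of Theorem~\ref{convolution}, which is precisely the coefficient-comparison argument you spell out (the paper omits the routine details of the Cauchy product and the vanishing of the $n=1$ coefficient, which your bookkeeping with Definitions~\ref{def:sigma} and~\ref{def:sigma-ast} handles correctly).
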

 \begin{proof}
 Immediate from Theorem~\ref{convolution}.
 \end{proof}
 \begin{theorem} \label{prime-2-squares}
 Let $p$ be prime.\\
 (a)\ If $p\equiv 1\bmod 4$, then
 \[
 \sum_{j=1}^{p-1} r_2(j)\left( \sigma^{\ast}(p-j)- 4\sigma^{\ast}(\frac{p-j}{2}) \right)= p-1.
 \]
 (b)\ If $p\equiv 3\bmod 4$, then
 \[
 \sum_{j=1}^{p-1} r_2(j)\left( \sigma^{\ast}(p-j)-4\sigma^{\ast}(\frac{p-j}{2}) \right)= -p -1.
 \]
 \end{theorem}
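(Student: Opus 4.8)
The plan is to specialize Theorem~\ref{sum-of-squares} to $k=2$ and $n=p$, isolate the convolution sum appearing in the statement, and then evaluate the two surviving pointwise terms using the fact that $p$ is an odd prime together with the known value of $r_2(p)$ recorded in (\ref{r2-r4-r8}).

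First I would write out Theorem~\ref{sum-of-squares} with $k=2$ at $n=p$, namely
\[
p\, r_2(p) = 4\left( \sigma^{\ast}(p) - 4\sigma^{\ast}(p/2) + \sum_{j=1}^{p-1} r_2(j)\left( \sigma^{\ast}(p-j) - 4\sigma^{\ast}(\tfrac{p-j}{2})\right)\right).
\]
Since $p$ is an odd prime we have $p/2 \notin \mathbb{N}$, so $\sigma^{\ast}(p/2) = 0$ by Definition~\ref{def:sigma-ast}; and by (\ref{sigma-star}), $\sigma^{\ast}(p) = \sigma(p) - \sigma(p/2) = (1+p) - 0 = 1+p$. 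Substituting these two values and solving for the convolution sum $S$ gives
\[
S := \sum_{j=1}^{p-1} r_2(j)\left( \sigma^{\ast}(p-j) - 4\sigma^{\ast}(\tfrac{p-j}{2})\right) = \frac{p\, r_2(p)}{4} - (1+p).
\]

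It then remains only to compute $r_2(p)$ in each residue class. Using $r_2(n) = 4\sum_{d\mid n}\left(\frac{-4}{d}\right)$ from (\ref{r2-r4-r8}) and summing over the two divisors $1$ and $p$ of the prime $p$: when $p\equiv 1\bmod 4$ both symbols equal $1$, so $r_2(p)=8$ and hence $S = 2p-(1+p)=p-1$, which is part (a); when $p\equiv 3\bmod 4$ the symbols are $1$ and $-1$, so $r_2(p)=0$ and hence $S = -(1+p)=-p-1$, which is part (b).

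I expect essentially no analytic obstacle here, since the whole argument is a specialization of Theorem~\ref{sum-of-squares} followed by bookkeeping. The only two points demanding care are (i) the vanishing $\sigma^{\ast}(p/2)=0$ for odd $p$, which is precisely what removes the otherwise-stray $-16\,\sigma^{\ast}(p/2)$ contribution, and (ii) the correct evaluation of the character sum defining $r_2(p)$ in the two cases $p\equiv 1$ and $p\equiv 3 \bmod 4$. Once these are pinned down, the claimed values $p-1$ and $-p-1$ fall out directly from the displayed relation for $S$.
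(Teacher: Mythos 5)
Your proposal is correct and follows essentially the same route as the paper: specialize Theorem~\ref{sum-of-squares} to $k=2$ at $n=p$, use $\sigma^{\ast}(p)=1+p$ and $\sigma^{\ast}(p/2)=0$, and evaluate $r_2(p)$ from (\ref{r2-r4-r8}) in the two residue classes. The only difference is cosmetic: you spell out the character-sum computation of $r_2(p)$ and solve explicitly for the convolution sum, which the paper leaves implicit.
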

 \begin{proof}
 by Theorem~\ref{sum-of-squares} we have
 \begin{equation} \label{two-squares}
 n r_2(n) = 4\sigma^{\ast}(n)- 16 \sigma^{\ast}(n/2) +
     \sum_{j=1}^{n-1} r_2 (j) \left( 4 \sigma^{\ast}(n-j)- 16 \sigma^{\ast}(\frac{n-j}{2}) \right).
 \end{equation}
 Further, if $p$ is prime, then clearly $\sigma^{\ast}(p)=1+p$ and $\sigma^{\ast}(p/2)=0$. Moreover, by virtue of
 formulas (\ref{r2-r4-r8}) we have that $r_2(p)=0$
 for $p\equiv 3\bmod 4$ and $r_2(p)=8$ for $p\equiv 1\bmod 4$. Now combine these facts with identity~(\ref{two-squares}) to conclude parts (a) and (b).
 \end{proof}
 \begin{corollary} \label{twins}
 Let $(p,p+2)$ be a twin-prime.
 \\
 (a)\ If $p\equiv 1\bmod 4$, then
 \[
 \sum_{j=1}^{p+1} r_2(j)\left( \sigma^{\ast}(p+2-j)- 4\sigma^{\ast}(\frac{p+2-j}{2}) \right)=
 \]
 \[
 -4 - \sum_{j=1}^{p-1} r_2(j)\left( \sigma^{\ast}(p-j)- 4\sigma^{\ast}(\frac{p-j}{2}) \right).
 \]
 (b)\ If $p\equiv 3\bmod 4$, then
 \[
 \sum_{j=1}^{p+1} r_2(j)\left( \sigma^{\ast}(p+2-j)-4\sigma^{\ast}(\frac{p+2-j}{2}) \right)=
 \]
 \[
 - \sum_{j=1}^{p-1} r_2(j)\left( \sigma^{\ast}(p-j)-4\sigma^{\ast}(\frac{p-j}{2}) \right).
 \]
 \end{corollary}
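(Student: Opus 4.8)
The plan is to recognize that both members of a twin-prime pair are themselves primes, so that Theorem~\ref{prime-2-squares} applies to each of them, and that its two cases combine to give the stated relations. First I would introduce the shorthand
\[
S(n) = \sum_{j=1}^{n-1} r_2(j)\left( \sigma^{\ast}(n-j)- 4\sigma^{\ast}(\frac{n-j}{2}) \right),
\]
so that the left-hand side of each part of the corollary is exactly $S(p+2)$, because the upper summation index $p+1$ equals $(p+2)-1$. In this notation Theorem~\ref{prime-2-squares} records that $S(p)=p-1$ when $p\equiv 1\bmod 4$ and $S(p)=-p-1$ when $p\equiv 3\bmod 4$.

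Next I would observe the elementary arithmetic fact that in a twin-prime pair $(p,p+2)$ with $p$ odd, the two primes lie in opposite residue classes modulo $4$: if $p\equiv 1\bmod 4$ then $p+2\equiv 3\bmod 4$, and if $p\equiv 3\bmod 4$ then $p+2\equiv 5\equiv 1\bmod 4$. This is the only place where the twin-prime hypothesis is used, and it is precisely what pairs up the two branches of Theorem~\ref{prime-2-squares}.

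For part (a), with $p\equiv 1\bmod 4$, Theorem~\ref{prime-2-squares}(a) gives $S(p)=p-1$, while Theorem~\ref{prime-2-squares}(b) applied to the prime $p+2\equiv 3\bmod 4$ gives $S(p+2)=-(p+2)-1=-p-3$. I would then simply verify the identity $-p-3=-4-(p-1)$, which is the claimed relation $S(p+2)=-4-S(p)$. For part (b), with $p\equiv 3\bmod 4$, Theorem~\ref{prime-2-squares}(b) gives $S(p)=-p-1$ and Theorem~\ref{prime-2-squares}(a) applied to $p+2\equiv 1\bmod 4$ gives $S(p+2)=(p+2)-1=p+1$, whence $S(p+2)=p+1=-(-p-1)=-S(p)$.

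There is essentially no obstacle here: the corollary is a direct consequence of Theorem~\ref{prime-2-squares}, and the argument is pure bookkeeping once one notices that the upper limit $p+1$ turns the left-hand side into $S(p+2)$ and that the two members of the twin-prime pair fall into complementary classes modulo $4$. The only point deserving a word is the confirmation that $p+2$ is genuinely prime, which is exactly the twin-prime assumption, so that Theorem~\ref{prime-2-squares} may legitimately be invoked for it as well as for $p$.
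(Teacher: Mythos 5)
Your proof is correct and follows exactly the paper's approach: the paper simply declares the corollary ``Immediate from Theorem~\ref{prime-2-squares},'' and your argument is the straightforward unpacking of that, applying the theorem to both $p$ and $p+2$ (prime by the twin-prime hypothesis) in their opposite residue classes modulo $4$ and verifying the arithmetic.
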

 \begin{proof}
 Immediate from Theorem~\ref{prime-2-squares}.
 \end{proof}
 %
 \begin{theorem}\label{prime-4-8-squares}
 Let $p$ be an odd prime. Then
 \[(a)\
 \sum_{j=1}^{p-1}r_4(j) \left( \sigma^{\ast}(p-j)- 4\sigma^{\ast}(\frac{p-j}{2}) \right)= p^2 -1.
 \]
 \[ (b)\
 \sum_{j=1}^{p-1}r_8(j) \left( \sigma^{\ast}(p-j)- 4\sigma^{\ast}(\frac{p-j}{2}) \right)= p^4 -1.
 \]
 \end{theorem}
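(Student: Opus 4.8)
The plan is to specialize Theorem~\ref{sum-of-squares} to $n=p$ for $k=4$ and $k=8$, evaluate all the boundary quantities explicitly, and then isolate the convolution sum. The mechanism is identical for the two parts; only the values $r_4(p)$ and $r_8(p)$ differ.

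First I would record two elementary facts about $\sigma^{\ast}$ at an odd prime. The only divisors of $p$ are $1$ and $p$, and both cofactors $p/1=p$ and $p/p=1$ are odd since $p$ is odd, so $\sigma^{\ast}(p)=1+p$. Moreover $p/2\notin\mathbb{N}$, whence $\sigma^{\ast}(p/2)=0$. These two evaluations collapse the leading term of Theorem~\ref{sum-of-squares} at $n=p$ into the single contribution $1+p$.

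Next I would compute $r_4(p)$ and $r_8(p)$ from the closed forms in~(\ref{r2-r4-r8}). For $r_4$ the term $\sigma(p/4)$ vanishes because $p/4\notin\mathbb{N}$, leaving $r_4(p)=8\sigma(p)=8(1+p)$. For $r_8$ one evaluates, using that $p$ is odd,
\[
(-1)^p\sum_{d\mid p}(-1)^d d^3=(-1)\big((-1)+(-1)p^3\big)=1+p^3,
\]
so that $r_8(p)=16(1+p^3)$. The sign bookkeeping from the two factors $(-1)^n$ and $(-1)^d$ in the formula for $r_8$ is the one place where I would be careful, and it is the only step that is not completely mechanical; nothing here constitutes a genuine obstacle, however.

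Finally I would substitute into Theorem~\ref{sum-of-squares} with $n=p$. Writing $S_k$ for the target sum $\sum_{j=1}^{p-1}r_k(j)\big(\sigma^{\ast}(p-j)-4\sigma^{\ast}(\tfrac{p-j}{2})\big)$, part (a) reads $8(1+p)\,p=8\big((1+p)+S_4\big)$, and dividing by $8$ and rearranging gives $S_4=(1+p)(p-1)=p^2-1$. Part (b) reads $16(1+p^3)\,p=16\big((1+p)+S_8\big)$, so $S_8=p(1+p^3)-(1+p)=p^4-1$. This yields both claimed identities.
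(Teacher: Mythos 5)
Your proposal is correct and follows essentially the same route as the paper: specialize Theorem~\ref{sum-of-squares} at $n=p$ with $k=4$ and $k=8$, use $\sigma^{\ast}(p)=1+p$, $\sigma^{\ast}(p/2)=0$, and the closed forms $r_4(p)=8(1+p)$, $r_8(p)=16(1+p^3)$ from~(\ref{r2-r4-r8}), then solve for the convolution sum. All evaluations, including the sign bookkeeping in $r_8(p)$, check out.
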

 \begin{proof}
 (a)\ As mentioned before $\sigma^{\ast}(p)=1+p$ and $\sigma^{\ast}(p/2)=0$. By
 identity (\ref{r2-r4-r8}) we have $r_4(p) = 8(1+p)$. Putting in formula~(\ref{four-squares}) we get
 \[
 8 p (1+p) = 8(p+1) + \sum_{j=1}^{p-1} r_4(j) \left(8 \sigma^{\ast}(p-j)-
    32 \sigma^{\ast}(\frac{p-j}{2}) \right),
 \]
 which gives the desired identity.
 \\
 (b)\ By Theorem~\ref{sum-of-squares}
 \begin{equation} \label{eight-squares}
 n r_8(n) = 16\left( \sigma^{\ast}(n)- 4 \sigma^{\ast}(n/2) +
     \sum_{j=1}^{n-1} r_8 (j) \left( \sigma^{\ast}(n-j)-
    4 \sigma^{\ast}(\frac{n-j}{2}) \right) \right).
 \end{equation}
 If $p$ is an odd prime, then by formulas (\ref{r2-r4-r8}) we have
 $r_8(p) = 16(1+ p^3)$, which combined with identity (\ref{eight-squares}) gives the result.
 \end{proof}
 \begin{corollary}
 Let $p$ be an odd prime. Then
 \[ (a)\
 \left( 1 + \sum_{j=1}^{p-1} r_2(j) \left(\sigma^{\ast}(p-j) - 4\sigma^{\ast}(\frac{p-j}{2}) \right) \right)^2
 =  
 \]
 \[
  1 + \sum_{j=1}^{p-1} r_4(j) \left(\sigma^{\ast}(p-j) - 4\sigma^{\ast}(\frac{p-j}{2})\right).
 \]
 \[ (b)\
  \left( 1 + \sum_{j=1}^{p-1} r_4(j) \left(\sigma^{\ast}(p-j) - 4\sigma^{\ast}(\frac{p-j}{2}) \right) \right)^2
 =  
 \]
 \[
 1 + \sum_{j=1}^{p-1} r_8(j) \left(\sigma^{\ast}(p-j) - 4\sigma^{\ast}(\frac{p-j}{2})\right).
 \]
 \end{corollary}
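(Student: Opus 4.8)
The plan is to read the three relevant sums off the preceding theorems and substitute; no new identity is needed. For brevity I would abbreviate
\[
S_k = \sum_{j=1}^{p-1} r_k(j)\left(\sigma^{\ast}(p-j) - 4\sigma^{\ast}(\tfrac{p-j}{2})\right),\qquad k = 2,4,8.
\]
Theorem~\ref{prime-2-squares} supplies $S_2 = p-1$ when $p\equiv 1\bmod 4$ and $S_2 = -p-1$ when $p\equiv 3\bmod 4$, while Theorem~\ref{prime-4-8-squares} supplies $S_4 = p^2-1$ and $S_8 = p^4-1$. Thus the corollary amounts to checking that $(1+S_2)^2 = 1+S_4$ and $(1+S_4)^2 = 1+S_8$ after plugging in these closed forms.

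For part (a), I would note that in the first case $1+S_2 = p$ and in the second case $1+S_2 = -p$, so that in \emph{both} congruence classes $(1+S_2)^2 = p^2$. On the other hand $1 + S_4 = 1 + (p^2-1) = p^2$, so the two sides coincide. For part (b), having already recorded that $1+S_4 = p^2$, I would simply square to obtain $(1+S_4)^2 = p^4$, and compare with $1 + S_8 = 1 + (p^4-1) = p^4$; the two agree.

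The only point that calls for any care — and the nearest thing to an obstacle — is that the value of $S_2$ genuinely depends on $p \bmod 4$, so one might expect the corollary to split into two cases. The substance of the argument is the observation that the left-hand side of (a) \emph{squares} $1+S_2$, which erases the sign difference between $p$ and $-p$; hence the case distinction collapses and no residue condition survives into the stated identities. Everything beyond this is direct substitution.
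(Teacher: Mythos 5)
Your proposal is correct and is exactly the argument the paper intends: the paper's proof is just ``Immediate from Theorems~\ref{prime-2-squares} and \ref{prime-4-8-squares},'' and your substitution of $1+S_2=\pm p$, $1+S_4=p^2$, $1+S_8=p^4$ fills in precisely those details, including the key point that squaring erases the sign distinction between the two residue classes of $p \bmod 4$.
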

 \begin{proof}
 Immediate from Theorems \ref{prime-2-squares} and \ref{prime-4-8-squares}.
 \end{proof}
 \section{Sums of two, four, and six triangular numbers}
 The following formula for $t_2(n)$, $t_4(n)$, and $t_6(n)$ can be found in Ono, Robins, and Wahl~\cite{Ono-Robins-Wahl}
 and in Williams~\cite{Williams}.
 \begin{equation} \label{t2-t4-t6}
 \begin{split}
 t_2(n) &= \sum_{d\mid 4n+1}\left( \frac{-4}{d}\right), \\
 t_4(n) &= \sigma(2n+1), \\
 t_6(n) &= \frac{-1}{8} \sum_{d\mid 4n+3}\left( \frac{-4}{d}\right) d^2.
 \end{split}
 \end{equation}
 \begin{theorem} \label{p-4p+1}
 If $p$ and $4p+1$ are prime numbers, then
 \[
 \sum_{j=1}^{p-1} t_2(j)\left( \sigma(p-j)-4 \sigma(\frac{p-j}{2}) \right) = -1.
 \]
 \end{theorem}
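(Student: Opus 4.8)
The plan is to specialize the triangular-number recursion of Theorem~\ref{sum-of-triangulars} to $k=2$ and $n=p$, and then to peel off the single boundary term $j=0$, which is exactly the discrepancy between the full sum $\sum_{j=0}^{p-1}$ appearing there and the truncated sum $\sum_{j=1}^{p-1}$ appearing in the statement. Concretely, Theorem~\ref{sum-of-triangulars} with $k=2$ gives
\[
p\, t_2(p) = 2 t_2(0)\left(\sigma(p) - 4\sigma(\frac{p}{2})\right) + 2\sum_{j=1}^{p-1} t_2(j)\left(\sigma(p-j) - 4\sigma(\frac{p-j}{2})\right),
\]
so the target sum is determined once the three quantities $t_2(0)$, $\sigma(p)$ (together with $\sigma(p/2)$), and $t_2(p)$ are known.

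These three boundary evaluations are where the hypotheses enter. First, $t_2(0)=1$, since the only representation of $0$ as a sum of two triangular numbers uses $x_1=x_2=0$ (equivalently, the constant term of the generating product is $1$). Second, I would observe that $p=2$ is impossible, because then $4p+1=9$ is not prime; hence $p$ is odd, so $p/2\notin\mathbb{N}_0$ and $\sigma(p/2)=0$, while $\sigma(p)=1+p$. The crux is the value of $t_2(p)$: invoking formula~(\ref{t2-t4-t6}), $t_2(p)=\sum_{d\mid 4p+1}\left(\frac{-4}{d}\right)$, and here the primality of $4p+1$ is essential, since it forces the divisors of $4p+1$ to be exactly $1$ and $4p+1$. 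As $4p+1\equiv 1\bmod 4$, both divisors are $\equiv 1\bmod 4$, each contributing $+1$, so $t_2(p)=2$.

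Finally I would substitute these values into the displayed identity. The left-hand side becomes $2p$, and the isolated term becomes $2\cdot 1\cdot((1+p)-0)=2(1+p)$, yielding
\[
2p = 2(1+p) + 2\sum_{j=1}^{p-1} t_2(j)\left(\sigma(p-j) - 4\sigma(\frac{p-j}{2})\right).
\]
Cancelling $2p$ from both sides and dividing by $2$ leaves the sum equal to $-1$, as claimed. The only real subtlety is the joint use of both primality conditions: primality of $p$ fixes $\sigma(p)$ and excludes $p=2$, while primality of $4p+1$ is precisely what pins down $t_2(p)=2$; the remainder is routine substitution.
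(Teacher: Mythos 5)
Your proposal is correct and follows essentially the same route as the paper: specialize Theorem~\ref{sum-of-triangulars} to $k=2$, $n=p$, split off the $j=0$ term, and plug in $\sigma(p)=1+p$, $\sigma(p/2)=0$, and $t_2(p)=2$ from~(\ref{t2-t4-t6}). The only difference is that you spell out details the paper leaves implicit (that $p=2$ is excluded since $4\cdot 2+1=9$ is composite, and that primality of $4p+1$ forces $t_2(p)=2$ because its only divisors $1$ and $4p+1$ are both $\equiv 1 \bmod 4$), which is a welcome clarification rather than a deviation.
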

 \begin{proof}
 Suppose that $p$ and $4p+1$ are primes. Clearly $\sigma(p)=1+p$, and by formulas (\ref{t2-t4-t6}) we have
 $t_2(p)= 2$. Further
 by Theorem~\ref{sum-of-triangulars}
 \begin{equation} \label{2-triangulars}
 p t_2(p) = 2 \sigma(p) - 8\sigma(p/2) +
    \sum_{j=1}^{p-1} t_2(j) \left( 2 \sigma(p-j) - 8\sigma(\frac{p-j}{2}) \right).
 \end{equation}
 Thus
 \[
 2p = 2(1+p) + 2 \sum_{j=1}^{p-1} t_2(j)\left( \sigma(p-j)- 4\sigma(\frac{p-j}{2}) \right).
 \]
 \end{proof}
 \begin{theorem}
 If $2n+1$ is prime, then
 \[
 \sum_{j=0}^{n-1} t_4(j) \left( \sigma(n-j) - 4\sigma(\frac{n-j}{2}) \right) = \frac{n(n+1)}{2}\quad (= \sum_{j=1}^n j).
 \]
 \end{theorem}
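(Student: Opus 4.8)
The plan is to proceed exactly as in the proof of Theorem~\ref{p-4p+1}, specializing the inductive formula of Theorem~\ref{sum-of-triangulars} to $k=4$. Writing $S$ for the sum in question, that theorem with $k=4$ reads
\[
n t_4(n) = 4 \sum_{j=0}^{n-1} t_4(j)\left( \sigma(n-j) - 4\sigma(\tfrac{n-j}{2}) \right) = 4 S,
\]
so that $S = \tfrac{1}{4} n t_4(n)$. The entire problem thus reduces to evaluating $n t_4(n)$ in closed form under the hypothesis that $2n+1$ is prime.

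Here I would invoke the known formula $t_4(n) = \sigma(2n+1)$ from~(\ref{t2-t4-t6}). Since $2n+1$ is prime by assumption, its only divisors are $1$ and $2n+1$, whence $\sigma(2n+1) = 1 + (2n+1) = 2(n+1)$. Substituting back gives
\[
S = \frac{1}{4}\, n\, t_4(n) = \frac{1}{4}\, n\, \sigma(2n+1) = \frac{1}{4}\, n\, 2(n+1) = \frac{n(n+1)}{2},
\]
which is precisely the claimed value (and equals $\sum_{j=1}^n j$ by the standard triangular-number identity).

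There is no real obstacle to overcome: once Theorem~\ref{sum-of-triangulars} supplies the identity $nt_4(n)=4S$ and the closed form $t_4(n)=\sigma(2n+1)$ is in hand, the primality hypothesis collapses $\sigma(2n+1)$ to $2(n+1)$ and the arithmetic is immediate. The only point worth double-checking is the lower summation index: unlike the sums-of-squares results, the triangular formula of Theorem~\ref{sum-of-triangulars} begins at $j=0$ (because $t_4(0)=1$), and there is no isolated leading term $\sigma^{\ast}(n)-4\sigma^{\ast}(n/2)$ to peel off as in~(\ref{four-squares}); the contribution of the $j=0$ summand is already absorbed into $S$. I would therefore simply confirm that the displayed $S$ matches the sum as written and present the three-line computation above.
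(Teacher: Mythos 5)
Your proposal is correct and is essentially identical to the paper's own proof: both specialize Theorem~\ref{sum-of-triangulars} to $k=4$ to get $n\,t_4(n)=4S$, then use $t_4(n)=\sigma(2n+1)=2(n+1)$ (from~(\ref{t2-t4-t6}) and the primality of $2n+1$) and divide by $4$. Your remark about the sum starting at $j=0$ with no peeled-off leading term is also consistent with how the paper states and uses the formula.
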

 \begin{proof}
 Assume that $2n+1$ is prime. Then by identities (\ref{t2-t4-t6}) we have
 $t_4(n) = 2n+2$.
 Moreover, from Theorem~\ref{sum-of-triangulars} we get
 \begin{equation} \label{4-triangulars}
 n t_4(n) =
   4 \sum_{j=0}^{n-1} t_4(j) \left( \sigma(n-j) - 4\sigma(\frac{n-j}{2}) \right).
 \end{equation}
 Combining these two identities proves the result.
 \end{proof}
 \begin{theorem}
 If $4n+3$ is prime, then
 \[
 \sum_{j=0}^{n-1} t_6(j) \left( \sigma(n-j)- 4\sigma(\frac{n-j}{2}) \right) = \frac{n(n+1)(2n+1)}{6}\quad (= \sum_{j=1}^n j^2).
 \]
 \end{theorem}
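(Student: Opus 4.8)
The plan is to follow exactly the template already used for sums of two and four triangular numbers. First I would instantiate Theorem~\ref{sum-of-triangulars} at $k=6$, which yields the recursion
\[
n t_6(n) = 6 \sum_{j=0}^{n-1} t_6(j)\left( \sigma(n-j) - 4\sigma(\tfrac{n-j}{2}) \right).
\]
This already isolates the target sum as $\tfrac{1}{6}\, n\, t_6(n)$, so the entire problem reduces to computing $t_6(n)$ under the hypothesis that $4n+3$ is prime.

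Next I would evaluate $t_6(n)$ from the closed form in~(\ref{t2-t4-t6}). Writing $p = 4n+3$, which is assumed prime, the only divisors of $p$ are $1$ and $p$, so
\[
t_6(n) = \frac{-1}{8}\left( \left(\tfrac{-4}{1}\right) + \left(\tfrac{-4}{p}\right) p^2 \right).
\]
Here $\left(\tfrac{-4}{1}\right)=1$ since $1\equiv 1\bmod 4$, and $\left(\tfrac{-4}{p}\right)=-1$ since $p\equiv 3\bmod 4$; hence $t_6(n) = \tfrac{p^2-1}{8}$. The primality of $4n+3$ is precisely what collapses the character sum to two terms, so this step is immediate.

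The one computational point to get right is the algebraic simplification back in terms of $n$. Substituting $p=4n+3$ gives $p^2-1 = 16n^2+24n+8 = 8(n+1)(2n+1)$, so that $t_6(n) = (n+1)(2n+1)$. Feeding this into the recursion yields
\[
\sum_{j=0}^{n-1} t_6(j)\left( \sigma(n-j) - 4\sigma(\tfrac{n-j}{2}) \right) = \frac{n\, t_6(n)}{6} = \frac{n(n+1)(2n+1)}{6},
\]
which is exactly $\sum_{j=1}^n j^2$. There is no genuine obstacle here: the structural work was carried out once and for all in Theorem~\ref{sum-of-triangulars}, and the only thing to verify carefully is the factorization $p^2-1 = 8(n+1)(2n+1)$ that converts the closed form for $t_6(n)$ into the quadratic giving the sum of the first $n$ squares.
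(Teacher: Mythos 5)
Your proposal is correct and follows exactly the paper's own argument: apply Theorem~\ref{sum-of-triangulars} with $k=6$, then use the primality of $4n+3$ to collapse the divisor sum in~(\ref{t2-t4-t6}) and obtain $t_6(n)=\frac{(4n+3)^2-1}{8}=(n+1)(2n+1)$, and substitute into the recursion. The only difference is that you spell out the evaluation of the character sum and the factorization $p^2-1=8(n+1)(2n+1)$ in more detail than the paper does.
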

 \begin{proof}
 If $4n+3$ is prime, then by (\ref{t2-t4-t6}) we have
 \[
 t_6(n) = -\frac{1}{8}(1 - (4n+3)^2) = (n+1)(2n+1).
 \]
 Next by Theorem~\ref{sum-of-triangulars} we have
 \[
 n t_6(n) = 6 \sum_{j=0}^{n-1} t_6(j) \left( \sigma(n-j) - 4\sigma(\frac{n-j}{2}) \right).
 \]
  Using the previous two formulas we obtain the result.
 \end{proof}
 \section{Facts on some infinite products}
 Throughout this section we suppose that $x$ is a complex number such that $|x|<1$.
 \begin{theorem}\label{master-series}
 Let $a,\ b,\ n \in \mathbb{N}$ and let $I$ be a nonempty subset of 
 $\{ 0,1,2,\ldots,b-2\}$. Then the coefficients of the infinite product
 \[
 \prod_{n=1}^{\infty}\prod_{i\in I} (1-x^n)^{-a} (1-x^{bn-i})^{a}
 \]
 are all positive.
 \end{theorem}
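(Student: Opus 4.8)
The plan is to recognize the stated product as an instance of the generating-function setup of Theorem~\ref{main1}, read off the resulting recursion, and then prove by strong induction that every coefficient is positive, the engine being that the convolution kernel in that recursion is a sum of nonnegative restricted divisor functions which is bounded below by $a$.

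First I would put the product into the canonical shape of Theorem~\ref{main1}. Write $\prod_{n\ge1}(1-x^n)^{-a}\prod_{i\in I}(1-x^{bn-i})^a=\sum_{n\ge0}p(n)x^n$. The factor $(1-x^n)^{-a}=(1-x^n)^{-(an)/n}$ corresponds to the set $A_0=\mathbb N$ with weight $f_0(n)=an$; for each $i\in I$ the factors $\prod_{n\ge1}(1-x^{bn-i})^{a}$ run over the progression $A^{(i)}=\{m\ge1:\ m\equiv -i\pmod b\}$ and correspond to $f^{(i)}(m)=-am$, so that $(1-x^m)^{a}=(1-x^m)^{-f^{(i)}(m)/m}$. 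Both the product and the auxiliary series $\sum f/n\,x^n$ are finite combinations of geometric-type series, hence converge absolutely and are analytic in $|x|<1$, so the hypotheses of Theorem~\ref{main1} are met. The recursion then reads $n\,p(n)=\sum_{k=1}^n p(n-k)\,c(k)$, with $p(0)=1$ and kernel $c(k)=\sum_i f_{i,A_i}(k)$.

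Next I would compute this kernel explicitly. The $A_0$-part contributes $\sum_{d\mid k}ad=a\sigma(k)$, and each $i\in I$ contributes $\sum_{d\mid k,\ d\equiv -i(b)}(-ad)=-a\,\sigma_{-i\bmod b,\,b}(k)$. Writing $\sigma(k)=\sum_{r=0}^{b-1}\sigma_{r,b}(k)$ and using that $i\mapsto -i\bmod b$ is injective, the kernel collapses to $c(k)=a\sum_{r\in R}\sigma_{r,b}(k)$, where $R=\{0,1,\dots,b-1\}\setminus\{-i\bmod b:\ i\in I\}$. The crucial point—and the only place where the hypothesis $I\subseteq\{0,\dots,b-2\}$ is used—is that the residue class $1$ can never be removed: $-i\equiv1\pmod b$ forces $i\equiv b-1$, which is excluded from $I$. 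Hence $1\in R$, and since $d=1$ is always a divisor of $k$ lying in the class $1\bmod b$ (note $b\ge2$ as $I\neq\varnothing$), we have $\sigma_{1,b}(k)\ge1$ and therefore $c(k)\ge a\ge1>0$ for every $k\ge1$.

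Finally I would close by strong induction on $n$: since $p(0)=1>0$ and, assuming $p(0),\dots,p(n-1)>0$, every summand $p(n-k)c(k)$ of $n\,p(n)=\sum_{k=1}^n p(n-k)c(k)$ is positive, it follows that $p(n)>0$. I do not expect a genuine obstacle: once the product is cast in the form of Theorem~\ref{main1}, positivity of the kernel is automatic from nonnegativity of divisor sums, and the entire difficulty is concentrated in the elementary residue count showing $1\in R$, i.e. that excluding the class $b-1$ forces the surviving class $1$ to contribute. (The same computation and conclusion go through verbatim if the product is read literally as $\prod_n(1-x^n)^{-a|I|}\prod_{i\in I}(1-x^{bn-i})^a$, since then the class $1$ survives with the strictly positive coefficient $a|I|$.)
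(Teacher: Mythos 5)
Your proposal is correct and follows essentially the same route as the paper: cast the product in the form of Theorem~\ref{main1}, observe that the resulting convolution kernel equals $a$ times a sum of restricted divisor functions $\sigma_{r,b}$ over the surviving residue classes, note that the class $1 \bmod b$ always survives (so the kernel is at least $a\sigma_{1,b}(k)\geq a>0$), and conclude by strong induction. In fact your bookkeeping is slightly more careful than the paper's, which writes the excluded classes as $\sigma_{i,b}$ rather than the correct $\sigma_{-i\bmod b,\,b}$ (and has typos such as ``$A(n)=0$'' and ``$a_{1,b}(n)$''); your observation that $-i\equiv 1\pmod b$ would force $i\equiv b-1$, which $I$ excludes, is exactly the point the paper's argument implicitly relies on.
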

 \begin{proof}
 Write 
 \[
 \prod_{n=1}^{\infty}\prod_{i\in I} (1-x^n)^{-a} (1-x^{bn-i})^{a} = \sum_{n=0}^{\infty} A(n) x^n.
 \]
 We show by induction that $A(n)=0$ for all $n\in\mathbb{N}_0$. Clearly $A(0)=1 >0$. Now assume that 
 the assertion is true for $j=0,1,\ldots, n-1$. By Theorem~\ref{main1} we have
 \[
 n A(n) = a \sigma(n) - a \sum_{i\in I}\sigma_{i,b}(n) +
 \sum_{j=1}^{n-1} A(j) \left( a\sigma(n-j) - a\sum_{i\in I}\sigma_{i,b}(n-j) \right).
 \]
 But
 \[
  a \sigma(n) - a \sum_{i\in I}\sigma_{i,b}(n) = 
  a \sum_{i=0}^{b-1} \sigma_{i,b}(n) - a \sum_{i\in I} \sigma_{i,b}(n) \geq
  a_{1,b}(n) >0.
  \]
 Thus by the induction hypothesis we have $n A(n) >0$ and therefore $A(n)>0$.
 \end{proof}
 \noindent
 For our next result we will need the following lemma.
 \begin{lemma}\cite[Exercise 22, p. 248]{Williams} \label{Williams-exercise}
 If $n\in \mathbb{N}$, then
 \[
 R(n)= 4 \sigma(n) - 4\sigma(n/2) + 8\sigma(n/4) - 32 \sigma(n/8) > 0.
 \]
 \end{lemma}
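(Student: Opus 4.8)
The plan is to avoid any generating-function machinery and instead exploit the multiplicativity of $\sigma$, reducing the inequality to a finite check governed by the $2$-adic valuation of $n$. First I would write $n = 2^a m$ with $m$ odd and $a = v_2(n) \in \mathbb{N}_0$. Since $\gcd(2^a,m)=1$ and $\sigma(2^j) = 2^{j+1}-1$, multiplicativity of the divisor sum gives $\sigma(2^j m) = (2^{j+1}-1)\sigma(m)$ for every $j\geq 0$, whereas $\sigma(q)=0$ whenever $q\notin\mathbb{N}_0$ by Definition~\ref{def:sigma}. Consequently each of the four terms $\sigma(n),\sigma(n/2),\sigma(n/4),\sigma(n/8)$ is an explicit multiple of $\sigma(m)$ depending only on $a$, and the whole problem collapses to showing that the resulting coefficient is positive.

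I would then treat the generic range $a\geq 2$ first, where all four arguments $2^{a-i}m$ for $i=0,1,2,3$ satisfy $a-i\geq -1$ and are therefore captured uniformly by $(2^{a-i+1}-1)\sigma(m)$ (the boundary value $a-i=-1$ correctly producing $0$). Substituting these and factoring out $\sigma(m)$, the coefficient of $2^{a}$ is $8-4+4-8=0$, so every power of $2$ cancels and one is left with the clean identity $R(n) = 24\,\sigma(m)$. As $\sigma(m)\geq 1>0$, positivity is immediate throughout this range.

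It remains to dispose of the small valuations $a=0$ and $a=1$, where $\sigma(n/4)$ and/or $\sigma(n/8)$ have non-integral arguments and must be set to $0$ by hand rather than via the uniform formula. A direct substitution yields $R(n)=4\sigma(m)$ when $a=0$ and $R(n)=8\sigma(m)$ when $a=1$, both manifestly positive. Assembling the three cases, one obtains $R(n) = c(a)\,\sigma(m)$ with $c(a)\in\{4,8,24\}$, whence $R(n)>0$ for all $n\in\mathbb{N}$. The computation is wholly routine; the only point that demands care—and thus the main obstacle, such as it is—is the bookkeeping of exactly which shifted arguments $2^{a-i}m$ remain integral for small $a$, so that the uniform formula is invoked precisely where it is valid and the vanishing terms are excluded otherwise. (One could alternatively identify $R(n)$ with a representation count and invoke its nonnegativity, but the elementary multiplicativity route above is self-contained and sharper, yielding the explicit value of $R(n)$.)
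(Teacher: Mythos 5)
Your proof is correct, but it runs on different machinery than the paper's. The paper splits into just two cases according to whether $8 \mid n$: when $8 \nmid n$ the term $\sigma(n/8)$ vanishes and one bounds $R(n) \geq 4\sigma(n) - 4\sigma(n/2) > 0$; when $n = 8k$ it repeatedly applies the identities~(\ref{divisors-basics}) (in the form $\sigma(N) = \sigma^{o}(N) + 2\sigma(N/2)$) to rewrite $R(8k) = 4\sigma^{o}(8k) + 4\sigma^{o}(4k) + 16\sigma^{o}(2k)$, a manifestly positive sum. You instead factor $n = 2^a m$ with $m$ odd and use multiplicativity, $\sigma(2^j m) = (2^{j+1}-1)\sigma(m)$, to evaluate $R(n)$ exactly: $4\sigma(m)$, $8\sigma(m)$, $24\sigma(m)$ according as $a=0$, $a=1$, $a\geq 2$. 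Your bookkeeping is sound: the observation that the uniform formula already returns $0$ at the boundary exponent $a-i=-1$ is exactly what lets you treat all $a \geq 2$ at once, and the arithmetic (the powers of $2$ cancelling, the constants summing to $-4+4-8+32=24$) checks out. The trade-off between the two routes: the paper's argument is shorter and reuses identities already set up for the rest of the paper, and its $8 \mid n$ case is consistent with your closed form, since $\sigma^{o}(8k)=\sigma^{o}(4k)=\sigma^{o}(2k)=\sigma(m)$ gives $(4+4+16)\sigma(m) = 24\sigma(m)$; your argument is sharper, in that it yields the explicit value $R(n) = c(a)\,\sigma(m)$ with $c(a)\in\{4,8,24\}$ in every case, whereas the paper settles for an inequality when $8 \nmid n$.
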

 \begin{proof}
 If $8\nmid n$, then
 \[
 R(n) = 4\sigma(n) - 4 \sigma(n/2) + 8 \sigma(n/4) \geq 4\sigma(n) - 4 \sigma(n/2) > 0.
 \]
 If $8 \mid n$, say $n=8k$, then repeatedly application of identities~(\ref{divisors-basics}) we obtain
 \[
 \begin{split}
 r(8k) 
 &= 
 4\sigma(8k) - 4\sigma(4k) + 8 \sigma(2k) -32 \sigma(k) \\
 &=
 4 \sigma^{o} (8k) + 4 \sigma^{o} (4k) + 16 \sigma^{o}(2k) \\
 & > 0.
 \end{split}
 \]
 This proves the lemma.
 \end{proof}
 \begin{theorem} \label{series-1}
 If
 \[ \prod_{n=1}^{\infty} (1+x^n)^4 (1+x^{2n})^2 (1+x^{4n})^4 = \sum_{n=0}^{\infty} \alpha(n) x^n, \]
 then $\alpha(n) > 0$ for all $n\in\mathbb{N}_0$.
 \end{theorem}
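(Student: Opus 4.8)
The plan is to recast the product into the multiplicative shape required by Theorem~\ref{main1}, and then to recognize that the resulting recursion has coefficients equal to the function $R$ of Lemma~\ref{Williams-exercise}. First I would clear the $(1+x^m)$ factors by means of the identity $1+x^m = (1-x^{2m})(1-x^m)^{-1}$. Applying this to each of the three factors and telescoping the contributions of consecutive exponents gives
\[
\prod_{n=1}^{\infty}(1+x^n)^4(1+x^{2n})^2(1+x^{4n})^4
= \prod_{n=1}^{\infty}(1-x^n)^{-4}(1-x^{2n})^{2}(1-x^{4n})^{-2}(1-x^{8n})^{4}.
\]
This is now exactly of the form treated in Theorem~\ref{main1}, and the accompanying convergence hypotheses are routine, since each $\sum_{n\in A_i}\frac{f_i(n)}{n}x^n$ is a constant multiple of $\sum x^{mn}$ and hence analytic in $|x|<1$.

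Next I would invoke Theorem~\ref{main1} with four sets: $A_1=\mathbb{N}$ and $f_1(n)=4n$, $A_2=2\mathbb{N}$ and $f_2(n)=-2n$, $A_3=4\mathbb{N}$ and $f_3(n)=2n$, and $A_4=8\mathbb{N}$ and $f_4(n)=-4n$, chosen so that $(1-x^n)^{-f_i(n)/n}$ reproduces each surviving factor above. Evaluating each $f_{i,A_i}(k)$ with the identity $\sigma_{0,m}(n)=m\sigma(n/m)$ from (\ref{divisors-basics}), the coefficient appearing in the recursion becomes
\[
\sum_{i=1}^{4} f_{i,A_i}(k)
= 4\sigma(k) - 4\sigma(k/2) + 8\sigma(k/4) - 32\sigma(k/8)
= R(k),
\]
which is precisely the quantity treated in Lemma~\ref{Williams-exercise}. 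Theorem~\ref{main1} therefore yields
\[
n\,\alpha(n) = \sum_{k=1}^{n} \alpha(n-k)\,R(k).
\]

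Finally I would argue by induction on $n$. The base case $\alpha(0)=1>0$ is immediate. Assuming $\alpha(j)>0$ for $0\le j\le n-1$, every factor $\alpha(n-k)$ in the sum above is positive by the inductive hypothesis, since $0\le n-k\le n-1$, while every $R(k)$ is positive by Lemma~\ref{Williams-exercise}. Hence $n\,\alpha(n)>0$, so $\alpha(n)>0$, completing the induction. The only real obstacle is the bookkeeping in the first two steps: one must telescope the exponents correctly after the substitution $1+x^m=(1-x^{2m})(1-x^m)^{-1}$, and then track which set $A_i$ and function $f_i$ each surviving factor corresponds to. Once the recursion coefficient is identified as $R(k)$, all the analytic difficulty has already been absorbed into Lemma~\ref{Williams-exercise}, and the positivity follows at once; in effect, the theorem is arranged precisely so that the preceding lemma does the essential work.
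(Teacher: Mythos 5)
Your proposal is correct and follows essentially the same route as the paper: the same conversion $(1+x^m)=(1-x^{2m})(1-x^m)^{-1}$ leading to $\prod(1-x^n)^{-4}(1-x^{2n})^{2}(1-x^{4n})^{-2}(1-x^{8n})^{4}$, the same application of Theorem~\ref{main1} with the recursion coefficient reduced via (\ref{divisors-basics}) to $R(k)=4\sigma(k)-4\sigma(k/2)+8\sigma(k/4)-32\sigma(k/8)$, and the same induction using the positivity of $R(k)$ from Lemma~\ref{Williams-exercise}.
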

 \begin{proof}
 By induction on $n$. Clearly $\alpha(0)=1 >0$. Suppose now that the statement holds for $j=0,1,\ldots,n-1$.
 Note that
 \[
 \prod_{n=1}^{\infty} (1+x^n)^4 (1+x^{2n})^2 (1+x^{4n})^4 =
 \prod_{n=1}^{\infty} (1-x^n)^{-4} (1-x^{2n})^2 (1-x^{4n})^{-2} (1-x^{8n})^4,
 \]
 which by Theorem~\ref{main1} leads to
 \[
 n \alpha(n) = 4 \sigma(n)- 2\sigma^E (n) + 2 \sigma_{0,4}(n) - 4 \sigma_{0,8}(n) +
 \]
 \[
 \sum_{j=1}^{n-1} \alpha(j) \big( 4 \sigma(n-j)- 2\sigma^E (n-j) + 2 \sigma_{0,4}(n-j) - 4 \sigma_{0,8}(n-j) \big).
 \]
 Further, by (\ref{divisors-basics}) and Lemma~\ref{Williams-exercise} we get
 \[
 4 \sigma(n)- 2\sigma^E (n) + 2 \sigma_{0,4}(n) - 4 \sigma_{0,8}(n) =
 4 \sigma(n) - 4\sigma(n/2) + 8\sigma(n/4) - 32 \sigma(n/8) > 0.
 \]
 Then by the induction hypothesis we have $n \alpha(n) > 0$, and thus $\alpha(n) >0$.
 \end{proof}
 %
 
%
\end{document}